\documentclass[12pt]{amsart}
\usepackage{latexsym, amsmath, amssymb,amsthm,amsopn,amsfonts}
\usepackage{version}
\usepackage{epsfig,graphics,color,graphicx,graphpap,dsfont}
\usepackage{amssymb}

\ifx\pdfoutput\undefined
  \DeclareGraphicsExtensions{.eps}
\else
  \ifx\pdfoutput\relax
    \DeclareGraphicsExtensions{.eps}
  \else
    \ifnum\pdfoutput>0
      \DeclareGraphicsExtensions{.pdf}
    \else
      \DeclareGraphicsExtensions{.eps}
    \fi
  \fi
\fi

\setlength{\textheight}{8in} \setlength{\oddsidemargin}{0.0in}
\setlength{\evensidemargin}{0.0in} \setlength{\textwidth}{6.4in}
\setlength{\topmargin}{0.18in} \setlength{\headheight}{0.18in}
\setlength{\marginparwidth}{1.0in}
\setlength{\abovedisplayskip}{0.2in}

\setlength{\belowdisplayskip}{0.2in}

\setlength{\parskip}{0.05in}

\pagestyle{headings}

\newcommand{\D}{{\mathcal D}}

\renewcommand{\l}{\lambda}

\newcommand{\LV}{\left|}
\newcommand{\RV}{\right|}
\newcommand{\LN}{\left\|}
\newcommand{\RN}{\right\|}
\newcommand{\LB}{\left[}
\newcommand{\RB}{\right]}
\newcommand{\LC}{\left(}
\newcommand{\RC}{\right)}

\newcommand{\R}{\mathbb{R}}
\newcommand{\vertiii}[1]{{\left\vert\kern-0.25ex\left\vert\kern-0.25ex\left\vert #1
    \right\vert\kern-0.25ex\right\vert\kern-0.25ex\right\vert}}

\theoremstyle{plain}

\newtheorem{thm}{Theorem}[section]
\newtheorem{prop}{Proposition}[section]

\newtheorem{lem}[prop]{Lemma}

\theoremstyle{definition}

\newtheorem{rem}{Remark}[section]
\newtheorem{defn}[prop]{Definition}

\numberwithin{equation}{section}

\def\squarebox#1{\hbox to #1{\hfill\vbox to #1{\vfill}}}

\usepackage{amsxtra}

\def\ba{\begin{array}}
\def\ea{\end{array}}
\def\bea{\begin{eqnarray}}
\def\eea{\end{eqnarray}}
\def\beas{\begin{eqnarray*}}
\def\eeas{\end{eqnarray*}}
\def\bi{\begin{itemize}}
\def\ei{\end{itemize}}


\def\g{\gamma}

\def\l{\lambda}

\def\si{\sigma}

\def\o{\omega}

\def\D{\Delta}

\def\O{\Omega}




\def\hR{\mathbb{R}}



\def\({\textnormal{(}}
\def\){\textnormal{)}}
\def\[{[\neg[}
\def\]{]\neg]}
\def\lan{\langle}
\def\ran{\rangle}


\def\q{\quad}

\def\neg{\negthinspace}



\def\hb{\hbox}


\def\pa{\partial}

\def\b1{{\bf 1}}


%


\title[NSE with randomized data]{Almost sure existence of Navier-Stokes Equations with randomized data in the whole space *}

\author[R.M. Chen]
{Robin Ming Chen}
\address{Robin Ming Chen\newline
Department of Mathematics\\
University of Pittsburgh\\
Pittsburgh, PA 15260} \email{mingchen@pitt.edu}
\author[D. Wang]
{Dehua Wang}
\address{Dehua Wang\newline
Department of Mathematics\\
University of Pittsburgh\\
Pittsburgh, PA 15260}
\email{dwang@math.pitt.edu}
\author[S. Yao]
{Song Yao}
\address{Song Yao\newline
Department of Mathematics\\
University of Pittsburgh\\
Pittsburgh, PA 15260}
\email{songyao@pitt.edu}
\author[C. Yu]
{Cheng Yu}
\address{Cheng Yu\newline
Department of Mathematics\\
University of Pittsburgh\\
Pittsburgh, PA 15260}
\email{chy39@pitt.edu}

\thanks{* This is a research note from an internal working seminar.}

\begin{document}

\begin{abstract}
This short note is concerned with the almost sure existence of weak solutions to the Navier Stokes equations with randomized data, and provides an outline to extend the periodic domain case in \cite{NPS} to the whole space case.
\end{abstract}
\maketitle

\section{Introduction}\label{sec_intro}
The goal of this note is to apply the main idea of \cite{NPS} to the incompressible Navier-Stokes equations in $\mathbb{R}^d$ with randomized initial data. The Navier-Stokes equations with pressure term being eliminated are given by
\begin{equation}\label{NSE_nopressure}
\left\{\begin{split}
& \pa_t \vec{u} = \Delta \vec{u} - \mathbb{P} \nabla \cdot (\vec{u} \otimes \vec{u}), \quad x\in \mathbb{R}^d,\ t>0, \\
& \nabla \cdot \vec{u} = 0, \\
& \vec{u}(x, 0) = \vec{f}(x), 
\end{split}
\right.
\end{equation}
where $\vec{u}$ is the fluid velocity, $\vec{f}$ is a divergence free vector field, and $\mathbb{P}$ is the Leray-Hopf projection onto the divergence free vector fields defined by
\begin{equation}\label{LerayP}
\mathbb{P} = \text{Id} - \nabla \Delta^{-1} \nabla \cdot.
\end{equation}
Here we take the viscosity to be 1.

The local/global well-posedness of the Navier-Stokes equations with weak/smooth initial data has been studied extensively, in both mild and weak solution settings. In this note, we consider the global weak solutions to \eqref{NSE_nopressure}, with initial data below certain regularity. The central difficulty of the supercritical case is that the linear part does not provide enough regularity/integrability required to estimate the nonlinear term. In \cite{BT1}, Burq and Tzvetkov used the randomization method to proved the local-wellposedness for the supercritical wave equations in the mild solution setting. In \cite{NPS}, Nahmod, Pavlovi\'c, and Staffilani also successfully employed the randomization technique to obtain the first global existence result on $\mathbb{T}^d$ in the weak solution setting. In this note, we extend the idea of \cite{NPS} to $\mathbb{R}^d$.

\subsection{Randomization}\label{subsec_random}

\medskip

Now we introduce our randomization setup for the initial data. First we follow the idea in \cite{ZF2} to construct a partition of the whole Fourier space. Define the rings
\begin{equation}\label{partition}
A_n = \big\{ \xi\in \mathbb{R}^d: \ (n-1)^{1/d} \leq |\xi| < n^{1/d} \big\}, \ \ n\in \mathbb{N}.
\end{equation}
Then we know that
\begin{equation*}
A_m \cap A_n = \emptyset \text{ for } m\neq n, \quad \mathbb{R}^d = \underset{n\in \mathbb{N}}{\cup} A_n,
\quad \text{and }  |A_n| \lesssim 1.
\end{equation*}

\begin{defn}\label{def_random}
Let $\{ l_n(\omega) \}_{n\in \mathbb{N}}$ be a sequence of real, independent, 0-mean random variables on a probability space $(\Omega, \mathcal{M}, P)$ with associated sequence of distributions $\{ \mu_n \}_{n\in \mathbb{N}}$ satisfying the property
\begin{equation}\label{cond1}
\exists\ c>0: \ \ \forall\ \gamma\in \mathbb{R}, \ \ \forall\ n\geq 1, \ \ \LV \int_\mathbb{R} e^{\gamma x}\ d\mu_n(x)\RV \leq e^{c\gamma^2}.
\end{equation}
.

For $\vec{f}\in (H^s(\mathbb{R}^d))^d$, let $\tilde{\Delta}_n$ be the Fourier projection operator given as
\begin{equation}\label{Fourierprojection}
\tilde\Delta_n \vec{f} = \mathcal{F}^{-1} \LC \hat{\vec{f}} (\xi) \chi_{A_n} \RC.
\end{equation}
Consider the map from $(\Omega, \mathcal{M})$ to $(H^s(\mathbb{R}^d))^d$ equipped with the Borel sigma algebra, defined by
\begin{equation}\label{randomization}
\omega\mapsto \vec{f}^{\omega}, \quad \vec{f}^\omega (x) = \sum_{n\in \mathbb{N}} l_n(\omega) \tilde\Delta_n \vec{f},
\end{equation}
and call such a map randomization.
\end{defn}

One can check that such a map is measurable and $\vec{f}^\omega \in L^2(\Omega; (H^s(\mathbb{R}^d))^d)$, and hence defines an $(H^s(\mathbb{R}^d))^d$-valued random variable.

\subsection{Main Results}

We follow the usual definition for the weak solution to the Navier-Stokes equations \eqref{NSE_nopressure} (c.f. \cite{NPS}). Our main results are
\begin{thm}\label{thm_exist}
For any $T>0$. Let $0 < s < \left\{\begin{array}{l}\frac{1}{2}, \q \hb{for } d = 2, \\\\
 {1\over4}, \q \hb{for } d = 3
 \end{array}\right.$. For a fixed $\vec{f}\in (H^{-s}(\R^d))^d$, $\nabla\cdot \vec{f} = 0$, and of mean zero. Let $\vec{f}^\omega$ be the randomization of $\vec{f}$ as defined in Definition \ref{def_random}. Then for almost all $\o \in \O$ there is a weak solution $\vec{u}$ to the Cauchy problem \eqref{NSE_nopressure} with initial data $\vec{f}^\omega$. Moreover $\vec{u}$ is of the form
\begin{equation}\label{solnsplit}
\vec{u} = e^{t\Delta} \vec{f}^\omega + \vec{w}
\end{equation}
where $\vec{w}\in L^\infty([0, T]; (L^2(\R^d)^d) \cap L^2([0, T]; (\dot{H}^1(\R^d))^d)$. When $d = 2$, such a weak solution is unique.

\end{thm}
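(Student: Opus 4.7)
I would adapt the Nahmod-Pavlovi\'c-Staffilani strategy from \cite{NPS}, decomposing the sought solution as $\vec u=\vec z^\omega+\vec w$ where $\vec z^\omega(t,x):=e^{t\Delta}\vec f^\omega$ carries the random low-regularity part and $\vec w$ is a finite-energy perturbation. Substituting into \eqref{NSE_nopressure} gives
\begin{equation*}
\partial_t\vec w-\Delta\vec w+\mathbb{P}\nabla\cdot\bigl(\vec w\otimes\vec w+\vec w\otimes\vec z^\omega+\vec z^\omega\otimes\vec w+\vec z^\omega\otimes\vec z^\omega\bigr)=0,\qquad \vec w(0)=0.
\end{equation*}
The problem then splits into two parts: first, proving that the deterministic linear evolution of the random data almost surely enjoys better integrability than $H^{-s}$ alone provides; second, constructing a Leray-Hopf type weak solution $\vec w\in L^\infty_tL^2_x\cap L^2_t\dot H^1_x$ of the forced equation.

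For the first step I would write $\vec z^\omega=\sum_n l_n(\omega)\,e^{t\Delta}\tilde\Delta_n\vec f$ and invoke a Khintchine-type inequality, valid under the sub-Gaussian bound \eqref{cond1}: pointwise in $(t,x)$,
\begin{equation*}
\bigl(\mathbb{E}|\vec z^\omega(t,x)|^r\bigr)^{1/r}\lesssim \sqrt r\,\Bigl(\sum_n|e^{t\Delta}\tilde\Delta_n\vec f(x)|^2\Bigr)^{1/2}.
\end{equation*}
Taking $r\ge\max(p,q)$ and applying Minkowski then yields
\begin{equation*}
\bigl(\mathbb{E}\|\vec z^\omega\|_{L^p_TL^q_x}^r\bigr)^{1/r}\lesssim \sqrt r\,\Bigl(\sum_n\|e^{t\Delta}\tilde\Delta_n\vec f\|_{L^p_TL^q_x}^2\Bigr)^{1/2}.
\end{equation*}
Since $|A_n|\lesssim 1$, Bernstein gives $\|\tilde\Delta_n g\|_{L^q}\lesssim n^{1/2-1/q}\|\tilde\Delta_n g\|_{L^2}$, the heat kernel gives the decay $\|e^{t\Delta}\tilde\Delta_n\vec f\|_{L^2}\lesssim e^{-ctn^{2/d}}\|\tilde\Delta_n\vec f\|_{L^2}$, and $\|\tilde\Delta_n\vec f\|_{L^2}^2\lesssim n^{2s/d}c_n$ with $\sum_n c_n\lesssim\|\vec f\|_{H^{-s}}^2$. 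Careful bookkeeping of these factors, together with the time integration, shows that the ranges $s<1/2$ for $d=2$ and $s<1/4$ for $d=3$ are precisely what is required to close the series for some admissible pair $(p,q)$ with $q>2$; Markov then upgrades the moment bound to the almost-sure statement $\|\vec z^\omega\|_{L^p_TL^q_x}<\infty$ off a null set.

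For the second step I would construct $\vec w$ by a Faedo-Galerkin or frequency-truncation scheme, deriving on each level $N$ the pathwise energy identity
\begin{equation*}
\tfrac12\|\vec w_N(t)\|_{L^2}^2+\int_0^t\|\nabla\vec w_N\|_{L^2}^2\,ds=\int_0^t\!\!\!\int\bigl(\vec w_N\otimes\vec z^\omega+\vec z^\omega\otimes\vec w_N+\vec z^\omega\otimes\vec z^\omega\bigr):\nabla\vec w_N\,dx\,ds,
\end{equation*}
the cubic self-interaction vanishing by $\nabla\cdot\vec w_N=0$. A representative mixed term is bounded by H\"older and Gagliardo-Nirenberg as
\begin{equation*}
\Bigl|\int\vec w_N\otimes\vec z^\omega:\nabla\vec w_N\,dx\Bigr|\le\|\vec z^\omega\|_{L^q}\|\vec w_N\|_{L^{2q/(q-2)}}\|\nabla\vec w_N\|_{L^2},
\end{equation*}
with the middle factor interpolated between $L^2$ and $\dot H^1$ so that a fraction of the dissipation absorbs it, while the pure source $\int\vec z^\omega\otimes\vec z^\omega:\nabla\vec w_N$ is bounded by $\|\vec z^\omega\|_{L^{2q}}^2\|\nabla\vec w_N\|_{L^2}$. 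Gronwall then produces a uniform pathwise bound $\|\vec w_N\|_{L^\infty_TL^2_x}+\|\nabla\vec w_N\|_{L^2_{T,x}}\le C(\omega,\|\vec f\|_{H^{-s}},T)$.

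I expect the main obstacle to be passing to the limit on the unbounded domain, which is the principal point where $\R^d$ differs from the periodic case treated in \cite{NPS}. I would combine Aubin-Lions on each ball $B_R$ to extract a subsequence converging strongly in $L^2_{t,x}(B_R)$, then diagonalize as $R\to\infty$, controlling the spatial tails of the quadratic terms via the almost-sure $L^p_TL^q_x$ integrability of $\vec z^\omega$ and the $L^\infty_TL^2_x$ bound on $\vec w_N$ to rule out loss of mass at infinity; the zeroth-order Fourier multiplier $\mathbb P$ poses no further difficulty once strong convergence of the nonlinearities is in hand. For $d=2$ uniqueness, the difference $\vec v=\vec w_1-\vec w_2$ of two such perturbations satisfies a standard energy identity whose dangerous contribution $\int\vec v\otimes\vec z^\omega:\nabla\vec v$ is handled by the 2D Ladyzhenskaya inequality $\|\vec v\|_{L^4}^2\lesssim\|\vec v\|_{L^2}\|\nabla\vec v\|_{L^2}$ together with the almost-sure integrability $\vec z^\omega\in L^p_TL^4_x$, leading via Gronwall to $\vec v\equiv 0$.
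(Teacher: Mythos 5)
Your overall skeleton coincides with the paper's: the same splitting $\vec{u}=e^{t\Delta}\vec{f}^\omega+\vec{w}$ with $\vec{w}$ solving \eqref{nonlinearpart}, a Burq--Tzvetkov averaging estimate for the free evolution (your blockwise Khintchine/Bernstein/heat-decay computation is a minor variant of the paper's Proposition \ref{prop_averaging}, which instead uses Hausdorff--Young together with $|A_n|\lesssim1$), a frequency-truncated (Friedrich's) approximation with energy bounds and Aubin--Lions, and Ladyzhenskaya-based uniqueness in $d=2$. However, there are two concrete gaps. First, your energy/Gronwall step is written as if a single plain energy inequality closes on all of $[0,T]$, but the forcing $\vec{g}=e^{t\Delta}\vec{f}^\omega$ is singular at $t=0$ (only $\vec{f}^\omega\in H^{-s}$, so $\|\vec{g}(t)\|_{L^2}\sim t^{-s/2}$ by \eqref{linearest1}), and this is exactly the point the paper isolates as delicate (Remark \ref{rem_g}): the uniform-in-$n$ short-time estimate in Lemma \ref{energyestimate} is obtained, following NPS Theorem 5.1, by using two different formulations of \eqref{nonlinearpart} near and away from $t=0$, and it is why the hypotheses \eqref{condg}--\eqref{condtg} carry the weights $t^\gamma$ with $\gamma<0$ and, in $d=3$, control of half a derivative, $t^\gamma(-\Delta)^{1/4}\vec{g}$ in $L^2_tL^6_x$ and $L^{8/3}_{t,x}$. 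Your sketch never produces (or uses) these weighted and fractional-derivative bounds, so as written the $d=3$ construction does not reduce to Theorem \ref{thm_existapprox}, and you would have to redo the short-time analysis from scratch to justify the uniform bound for the approximations $\vec{w}_n$.

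Second, the attribution of the thresholds $s<\tfrac12$ ($d=2$) and $s<\tfrac14$ ($d=3$) to ``closing the series'' in the probabilistic step is backwards. The averaging estimate \eqref{eq:a002} holds for every $s\ge0$ as long as the exponents satisfy \eqref{conds}; what fixes the admissible range of $s$ is the list of space-time norms of $\vec{g}$ that the deterministic energy argument demands, i.e.\ \eqref{condtg} (e.g.\ $t^\gamma\vec{g}\in L^4_{t,x}$ for $d=2$ forces $(s-2\gamma)\cdot4<2$, and the $L^8_{t,x}$ and $(-\Delta)^{1/4}$ norms for $d=3$ force $s<\tfrac14$). So the logic of the proof is: the nonlinear estimates dictate the norms, Proposition \ref{prop_averaging} plus Bienaym\'e--Tchebichev then shows these norms are finite off a set of arbitrarily small probability, and almost sure existence follows by exhausting $\Omega$ over $\lambda\to\infty$ (and $T\to\infty$); this last union/limiting step, which converts the large-deviation bound \eqref{eq:a004} into the ``for almost all $\omega$'' statement, is also missing from your write-up, where Markov's inequality is invoked only for a fixed pair $(p,q)$.
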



In Section \ref{sec_diff-eqn} we introduce the difference equation for the fluctuation $\vec{w}$ and build up the approximation scheme via the Friedrich's method, and derive {\it a priori} energy estimates to obtain global weak solutions. In Section \ref{sec_asexist} we give both the deterministic and probabilistic estimates on the heat flow with the randomized data, and consequently obtain the almost sure existence.

\section {Approximation scheme}\label{sec_diff-eqn}

\subsection{Equations for the fluctuation}
Recall from \eqref{solnsplit} that
\begin{equation*}
\vec{u} = e^{t\Delta} \vec{f}^\omega + \vec{w},
\end{equation*}
the equation for the fluctuation $\vec{w}$ is
\begin{equation}\label{nonlinearpart}
\left\{\begin{split}
& \pa_t \vec{w} = \Delta \vec{w} - \mathbb{P} \nabla \cdot (\vec{w} \otimes \vec{w}) - \mathbb{P} \nabla \cdot (\vec{w} \otimes \vec{g}) - \mathbb{P} \nabla \cdot (\vec{g} \otimes \vec{w}) - \mathbb{P} \nabla \cdot (\vec{g} \otimes \vec{g}), \\
& \nabla \cdot \vec{w} = 0, \\
& \vec{w} (x, 0) = 0,
\end{split}
\right.
\end{equation}
where $\vec{g} = e^{t\Delta}\vec{f}^\omega$. We hence look for the weak solution to \eqref{nonlinearpart} which is defined in a similar way as in \cite{NPS}.
We have the following global existence of weak solutions to system \eqref{nonlinearpart}.
\begin{thm}\label{thm_existapprox}
For any $T>0$, let $\lambda>0$, $\gamma<0$ and $\left\{\begin{array}{ll}0<s, & \text{ if } d=2 \\ 0<s<{1\over4} & \text{ if } d = 3 \end{array}\right.$ be given. Let $\vec{g}$ be a divergence free vector field and satisfy
\begin{equation}\label{condg}
\begin{split}
&\|\vec{g}\|_{L^2}\lesssim (1+\frac{1}{t^{s\over2}}),
\\&\|\nabla^k \vec{g}\|_{L^{\infty}}\lesssim \LC \max\{t^{-1},t^{-(k+s+\frac{d}{2})}\}\RC^{\frac{1}{2}},\;\;\text{ for}\;\; k=0,,1
\end{split}
\end{equation}
and
\begin{equation}\label{condtg}\left\{\begin{array}{l}
\|t^{\gamma}\vec{g}(x,t)\|_{L^{4}([0,T];L^4_x)}\leq \lambda,\quad \text{ if } d=2,\\\\
\LN t^{\gamma}\LB I + (-\Delta)^{\frac{1}{4}} \vec{g} \RB\RN_{L^2([0,T]; L^6_x)} \\\\ \quad +\LN t^{\gamma}\LB I + (-\Delta)^{\frac{1}{4}} \RB \vec{g}\RN_{L^{8\over3}([0,T]; L^{8\over3}_x)}+\|t^{\gamma}\vec{g}\|_{L^8([0,T]; L^8_x)}\leq \lambda, \text{ if } d=3.
\end{array}\right.
\end{equation}
Then there exists a weak solution $\vec{w}$ to the system \eqref{nonlinearpart}.
\end{thm}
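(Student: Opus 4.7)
The plan is to build $\vec{w}$ as the limit of a Friedrichs approximation, derive $N$-uniform energy estimates from the hypotheses \eqref{condg}--\eqref{condtg}, and recover compactness locally in space so as to pass to the limit in the nonlinearity. Let $J_N$ denote the Fourier projection onto $\{|\xi|\le N\}$, set $\vec{g}_N = J_N \vec{g}$, and consider
\begin{equation*}
\pa_t \vec{w}_N = \Delta \vec{w}_N - J_N\mathbb{P}\nabla\cdot J_N\LB (\vec{w}_N + \vec{g}_N)\otimes (\vec{w}_N + \vec{g}_N) - \vec{g}_N\otimes \vec{g}_N \RB,\q \vec{w}_N(0)=0.
\end{equation*}
On $J_N L^2(\R^d)^d$ the right-hand side is locally Lipschitz, so Picard iteration yields a unique divergence-free local solution $\vec{w}_N \in C([0,T_N); J_NL^2)$.

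Testing against $\vec{w}_N$ and using $\nabla\cdot\vec{w}_N = \nabla\cdot\vec{g}_N = 0$ kills the self-interaction, so
\begin{equation*}
\tfrac{1}{2}\tfrac{d}{dt}\|\vec{w}_N\|_{L^2}^2 + \|\nabla \vec{w}_N\|_{L^2}^2 = \int\LB \vec{w}_N\otimes \vec{g}_N + \vec{g}_N\otimes \vec{w}_N + \vec{g}_N\otimes \vec{g}_N\RB :\nabla \vec{w}_N\, dx.
\end{equation*}
For $d=2$ the Gagliardo--Nirenberg inequality $\|\vec{w}_N\|_{L^4}\lesssim \|\vec{w}_N\|_{L^2}^{1/2}\|\nabla\vec{w}_N\|_{L^2}^{1/2}$, H\"older, and Young absorb half of the dissipation on the left and reduce the estimate to $\tfrac{d}{dt}\|\vec{w}_N\|_{L^2}^2 + \|\nabla\vec{w}_N\|_{L^2}^2 \lesssim \|\vec{g}\|_{L^4}^4(\|\vec{w}_N\|_{L^2}^2 + 1)$. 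Since $\gamma<0$ and $\|t^\gamma\vec{g}\|_{L^4_tL^4_x}\le\lambda$, one has $\int_0^T\|\vec{g}\|_{L^4}^4\,dt\le T^{-4\gamma}\lambda^4<\infty$, so Gronwall gives an $N$-uniform bound in $L^\infty(0,T;L^2)\cap L^2(0,T;\dot H^1)$ together with global existence of $\vec{w}_N$ on $[0,T]$. In three dimensions the same scheme applies once the cross and forcing terms are split with H\"older exponents adapted to \eqref{condtg}; after an integration by parts a quarter derivative is transferred onto $\vec{g}$, so the $(-\Delta)^{1/4}\vec{g}$ norms enter, and the restriction $s<1/4$ is exactly what makes the exponents close.

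Finally, Banach--Alaoglu produces a subsequential limit $\vec{w}_N\rightharpoonup^*\vec{w}$ in $L^\infty(0,T;L^2)$ and $\vec{w}_N\rightharpoonup\vec{w}$ in $L^2(0,T;\dot H^1)$, while the equation bounds $\pa_t\vec{w}_N$ uniformly in a negative-index space of the form $L^p(0,T;H^{-k}(\R^d))$. The main obstacle, specific to the $\R^d$ setting and not present in \cite{NPS}, is that $H^1(\R^d)$ fails to embed compactly into $L^2(\R^d)$, so strong convergence cannot be extracted globally. I would remedy this by multiplying by a smooth spatial cut-off $\chi_R$ supported in $B_{2R}$, applying the Aubin--Lions lemma to $\chi_R\vec{w}_N\in L^2(0,T;H^1_0(B_{2R}))$ with the commutators $[\chi_R,\Delta]$ and $[\chi_R,\mathbb{P}\nabla]$ controlled by the uniform $L^2\dot H^1$ bound, and performing a diagonal extraction in $R$ to obtain strong convergence in $L^2(0,T;L^2_{loc})$. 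This is enough for $\vec{w}_N\otimes\vec{w}_N$ to pass to the limit against any compactly supported test function; the $\vec{g}_N$-dependent terms pass by strong convergence $\vec{g}_N\to\vec{g}$ in the norms of \eqref{condtg}, producing the required weak solution $\vec{w}$.
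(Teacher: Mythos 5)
Your overall architecture is the same as the paper's: Friedrichs truncation $J_N$, testing with $\vec{w}_N$ to get $n$-uniform bounds in $L^\infty_tL^2_x\cap L^2_t\dot H^1_x$ plus a bound on $\pa_t\vec{w}_N$ in a negative space, then Aubin--Lions and weak limits. Your $d=2$ estimate is correct (and indeed $\gamma<0$ lets you strip the weight, $\int_0^T\|\vec{g}\|_{L^4}^4\,dt\le T^{-4\gamma}\lambda^4$), and your localization-plus-diagonal-extraction treatment of the failure of compact embedding on $\R^d$ is actually spelled out more carefully than in the paper, which simply invokes Aubin--Lions (one can even avoid the commutators: the cut-off function acts boundedly on the negative-index space containing $\pa_t\vec{w}_N$, so localized Aubin--Lions applies directly).

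The genuine gap is the $d=3$ energy estimate, which is precisely the step the paper singles out as ``much more delicate'' (Remark 2.3) and resolves by the two-formulation small-time/large-time argument of Theorem 5.1 in \cite{NPS}. Your one sentence --- ``after an integration by parts a quarter derivative is transferred onto $\vec{g}$, so the $(-\Delta)^{1/4}\vec{g}$ norms enter, and the restriction $s<1/4$ is exactly what makes the exponents close'' --- is not an argument: integration by parts does not move a fractional derivative onto $\vec{g}$ (that requires a duality/fractional Leibniz step you never set up), and within Theorem 3.2 the role of $s<1/4$ is not to close any H\"older exponents --- the conditions \eqref{condg}--\eqref{condtg} are hypotheses here; $s<1/4$ is what makes them achievable for the randomized heat flow via Proposition 3.2. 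In fact a direct Gronwall closure in $d=3$ does look feasible from \eqref{condtg} alone once you use $\gamma<0$ to remove the weight: interpolating the $L^{8/3}_tL^{8/3}_x$ and $L^8_tL^8_x$ bounds gives $\vec{g}\in L^4_tL^4_x$ for the $\vec{g}\otimes\vec{g}$ term, and estimating the cross terms by $\|\vec{g}\|_{L^8_x}\|\vec{w}_N\|_{L^{8/3}_x}\|\nabla\vec{w}_N\|_{L^2_x}$ with Gagliardo--Nirenberg and Young produces the Gronwall coefficient $\|\vec{g}\|_{L^8_x}^{16/5}$, which is integrable in time by H\"older since $t^\gamma\vec{g}\in L^8_tL^8_x$ and $\gamma<0$ --- but this computation (or the small-time/large-time argument of \cite{NPS} that the paper cites, which is what the $(-\Delta)^{1/4}$ norms and \eqref{condg} are actually for) must be carried out; as written, the heart of the three-dimensional case is asserted rather than proved, and the mechanism you name would not deliver it.
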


\subsection{Approximation} Since now the problem is posed in the whole space, we will use the classical Friedrich's approximation scheme. To this end, we first  introduce an operator $J_n$ as follows
 \begin{equation*}
 J_nh(x):=\mathcal{F}^{-1}(\chi_{B_n}(\xi)\hat{h}(x)),
 \end{equation*}
 where $\mathcal{F}^{-1}$ denotes the inverse Fourier transform, $B_n$ is the ball of radius $n$ centered at the origin, and $\hat{h}$ denotes the Fourier transform in the space variables.

The approximation we use is the following
\begin{equation}\label{approximationofnonlinearpart}
\left\{\begin{split}
& \pa_t \vec{w}_n = \Delta J_n\vec{w}_n - J_n\mathbb{P} \nabla \cdot (J_n\vec{w}_n \otimes J_n\vec{w}_n) - J_n\mathbb{P} \nabla \cdot (J_n\vec{w}_n \otimes J_n\vec{g}) -
\\&\;\;\;\;\;\;\;\;\;\;J_n\mathbb{P} \nabla \cdot (J_n\vec{g}\otimes J_n\vec{w}_n) - J_n\mathbb{P} \nabla \cdot (J_n\vec{g} \otimes J_n\vec{g}), \\
& \nabla \cdot \vec{w}_n = 0, \\
& \vec{w}_n (x, 0) = J_n(\vec{w}(0))=0.
\end{split}
\right.
\end{equation}

Following a standard ODE approach similar to the one in \cite{NPS} we have

\begin{lem}\label{lem_approx}
Let $\lambda>0$, $\gamma<0$ and $\left\{\begin{array}{ll}0<s, & \text{ if } d=2 \\ 0<s<{1\over4} & \text{ if } d = 3 \end{array}\right.$ be given. Let $\vec{g}$ be a divergence free vector field and satisfies \eqref{condg} and \eqref{condtg}. For any $n>0$, there exists some $\delta>0$ such that system \eqref{approximationofnonlinearpart} admits a unique solution in $X_\delta = L^\infty([0, \delta]; L^2(\mathbb{R}^d)) \cap L^2([0, \delta]; \dot{H}^1(\mathbb{R}^d))$.
\end{lem}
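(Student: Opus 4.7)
The plan is to recast \eqref{approximationofnonlinearpart} as a Carath\'eodory ODE on the closed subspace
\[
L^2_n := \bigl\{ h\in L^2(\mathbb{R}^d) : \operatorname{supp} \hat h \subset B_n \bigr\}
\]
of $L^2(\mathbb{R}^d)$ and solve it by Picard iteration on the Duhamel form $\vec{w}_n(t) = \int_0^t F(\tau, \vec{w}_n(\tau))\, d\tau$, where $F(t,\vec{w}_n)$ denotes the right-hand side of \eqref{approximationofnonlinearpart}. The first observation is that every term on that right-hand side carries an outer $J_n$, so the flow preserves $L^2_n$; since $\vec{w}_n(0)=0 \in L^2_n$ we may restrict to $L^2_n$-valued unknowns, and in particular replace all the inner $J_n\vec{w}_n$ by $\vec{w}_n$.

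On $L^2_n$, Bernstein's inequality gives $\|\nabla^k h\|_{L^p(\mathbb{R}^d)} \lesssim n^{k+d(\frac{1}{2}-\frac{1}{p})} \|h\|_{L^2}$ for $p\ge 2$, so $\Delta J_n$ is bounded on $L^2_n$ with operator norm at most $n^2$, and the Leray projector $\mathbb{P}$ is bounded on $L^2$. Straightforward bilinear estimates then yield
\begin{align*}
\|F(t,\vec{w}_n^{(1)}) - F(t,\vec{w}_n^{(2)})\|_{L^2} &\lesssim \bigl(n^2 + n^{1+\frac{d}{2}} R + n\|\vec{g}(t)\|_{L^\infty}\bigr)\|\vec{w}_n^{(1)} - \vec{w}_n^{(2)}\|_{L^2},\\
\|F(t,0)\|_{L^2} &\lesssim n\,\|\vec{g}(t)\|_{L^\infty}\,\|\vec{g}(t)\|_{L^2},
\end{align*}
whenever $\|\vec{w}_n^{(i)}\|_{L^2}\le R$. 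By \eqref{condg}, $n\|\vec{g}(\cdot)\|_{L^\infty} = O(t^{-(s+d/2)/2})$ and $\|\vec{g}(\cdot)\|_{L^\infty}\|\vec{g}(\cdot)\|_{L^2} = O(t^{-s-d/4})$ as $t\to 0^+$; under the constraints $s<\tfrac{1}{2}$ if $d=2$ and $s<\tfrac{1}{4}$ if $d=3$, both exponents are strictly greater than $-1$, so these quantities lie in $L^1([0,T])$. A standard Picard fixed-point argument on the closed ball of radius $R$ in $C([0,\delta]; L^2_n)$, with $\delta=\delta(n,\lambda)>0$ chosen small enough to make the map a contraction and to keep the forcing integral below $R/2$, then yields a unique solution $\vec{w}_n\in C([0,\delta]; L^2_n)$; the additional $L^2([0,\delta];\dot H^1)$ regularity is automatic, since Bernstein gives $\|\nabla h\|_{L^2}\le n\|h\|_{L^2}$ on $L^2_n$.

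The only real obstacle is the potential singularity of $\vec{g}(t)$ as $t\to 0^+$: although the pointwise bounds in \eqref{condg} blow up, the blow-up is $L^1$-integrable in time precisely under the stated range of $(s,d)$, which is exactly what is needed to place the problem within the scope of the Carath\'eodory theorem. The $n$-dependence of the Lipschitz constants is what forces $\delta$ to depend on $n$; this is the standard feature of the Friedrich's scheme, and the uniform-in-$n$ bounds needed later to pass to the limit will be obtained separately from the energy estimate.
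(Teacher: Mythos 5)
Your overall strategy --- viewing \eqref{approximationofnonlinearpart} as an ODE with values in the frequency-truncated space $L^2_n$, using Bernstein to make the right-hand side Lipschitz in $\vec{w}_n$, and absorbing the $t\to 0^+$ singularity of $\vec{g}$ through an $L^1$-in-time Carath\'eodory/Picard argument --- is exactly the ``standard ODE approach'' the paper invokes without giving details (it simply refers to \cite{NPS}), and for $d=3$ your exponent count is consistent with the hypothesis $0<s<\frac14$. One point you leave implicit: uniqueness is asserted in all of $X_\delta$, not only among $L^2_n$-valued functions; this is harmless, since applying $I-J_n$ to the equation kills every term on the right-hand side (each carries an outer $J_n$, and $(I-J_n)\Delta J_n=0$), so $\partial_t (I-J_n)\vec w_n=0$ and any $X_\delta$-solution automatically has Fourier support in $B_n$, after which your fixed-point uniqueness applies.

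The genuine gap is in $d=2$: the lemma assumes only $0<s$, but your proof needs $s<\frac12$ --- your forcing bound $\|\vec g\|_{L^\infty}\|\vec g\|_{L^2}=O(t^{-s-1/2})$ is time-integrable only for $s<\frac12$, and your Lipschitz coefficient $n\|\vec g\|_{L^\infty}=O(t^{-(s+1)/2})$ only for $s<1$ --- so you have quietly imported the restriction from Theorem \ref{thm_exist}, which is not among the lemma's hypotheses. This is precisely why \eqref{condtg}, which you never use, is listed as an assumption: for $d=2$ the $\vec g$-dependent terms should be estimated in $L^4$ rather than $L^\infty\times L^2$. Indeed, $\|J_n\mathbb{P}\nabla\cdot(J_n\vec g\otimes J_n\vec g)\|_{L^2}\lesssim n\|\vec g\|_{L^4}^2$ and $\|J_n\mathbb{P}\nabla\cdot(\vec w\otimes J_n\vec g)\|_{L^2}\lesssim n\|\vec w\|_{L^4}\|\vec g\|_{L^4}\lesssim n^{1+\frac{d}{4}}\|\vec w\|_{L^2}\|\vec g\|_{L^4}$ by Bernstein, while $\gamma<0$ and \eqref{condtg} give, for $t\le T$,
\begin{equation*}
\|\vec g(t)\|_{L^4_x}=t^{-\gamma}\bigl(t^{\gamma}\|\vec g(t)\|_{L^4_x}\bigr)\le T^{-\gamma}\bigl(t^{\gamma}\|\vec g(t)\|_{L^4_x}\bigr)\in L^4([0,T])\subset L^1([0,T]),
\end{equation*}
and similarly $\|\vec g\|_{L^4_x}^2\in L^2([0,T])\subset L^1([0,T])$. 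With this substitution both the forcing and the Lipschitz coefficient are integrable in time for every $s>0$, and your Picard/Carath\'eodory scheme proves the lemma as stated; as written, it only establishes it under the extra restriction $s<\frac12$ when $d=2$.
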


\begin{rem}\label{rem_condJg}
Note that $J_n$ is a bounded operator on $L^p$ for all $n\in \mathbb{N}$. Hence conditions \eqref{condg} and \eqref{condtg} hold for all $J_ng$ if they hold for $\vec{g}$. Also we see that $J_n$ commutes with $\text{div}$, so $J_n\vec{g}$ is also divergence free. Such properties have been explored in for instance \cite{Mas}.
\end{rem}

Note that $J_n^2=J_n$. It is easy to check that $J_n\vec{w}_n$ is also a solution to \eqref{approximationofnonlinearpart}. This implies $J_n\vec{w}_n=\vec{w}_n$ by the uniqueness and hence, one can remove all the operator $J_n$ in front of $\vec{w}_n$, and only keep those in front of nonlinear parts. Thus, we arrive at the following new system:
\begin{equation}\label{newapproximation}
\left\{\begin{split}
& \pa_t \vec{w}_n = \Delta \vec{w}_n - J_n\mathbb{P} \nabla \cdot (\vec{w}_n \otimes \vec{w}_n) - J_n\mathbb{P} \nabla \cdot (\vec{w}_n \otimes J_n\vec{g}) -
 \\&\;\;\;\;\;\;\;\;\;J_n\mathbb{P} \nabla \cdot (J_n\vec{g} \otimes \vec{w}_n) -J_n \mathbb{P} \nabla \cdot (J_n\vec{g} \otimes J_n\vec{g}), \\
& \nabla \cdot \vec{w}_n = 0, \\
& \vec{w}_n (x, 0) = 0.
\end{split}
\right.
\end{equation}
The key step now is to show that $\delta$ can be extended to any time $T>0$ and that we have some local-in-time estimates which are uniform in $n$. Applying Aubin-Lions Lemma, one can pass to the limit and recover a solution to the initial system \eqref{NSE_nopressure}. In order to arrive at this goal, we need some {\it a priori} energy estimates for the difference equation \eqref{nonlinearpart}.

\subsection{Energy estimates}
The energy functional associated to \eqref{nonlinearpart} is
\begin{equation*}
E(\vec{w})=\int_{\mathbb{R}^d}|\vec{w}|^2d\,x+ \int_0^t\int_{\mathbb{R}^d}|\nabla  \vec{w}|^2d\,x\,d\,\tau.
\end{equation*}
We have
\begin{lem}
\label{energyestimate}
Under the same assumptions as in Lemma \ref{lem_approx}, let $\vec{w}\in X_T$ be a solution to the approximation system \eqref{newapproximation} on $[0, T]$ for any fixed $T>0$. Then
\begin{align}
&E(\vec{w})(t)\leq C(T,\lambda,\g, s),\;\;\;\;\text{ for all }t\in [0,T], \label{energyE}\\
& \LN\frac{d}{dt}\vec{w}\RN_{L^{4\over d}([0, T]; H^{-1}_x)}\leq C(T,\lambda,\g, s). \label{energydw}
\end{align}
\end{lem}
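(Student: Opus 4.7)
The plan is to test \eqref{newapproximation} against $\vec{w}_n$ itself, derive a differential inequality for $E(\vec{w}_n)(t)$ and close by Gronwall for \eqref{energyE}, then bound $\partial_t\vec{w}_n$ in $H^{-1}_x$ term by term by duality for \eqref{energydw}. Multiplying \eqref{newapproximation} by $\vec{w}_n$ and integrating in $x$, the operators $J_n$ and $\mathbb{P}$ can be transferred onto $\vec{w}_n$ since $\mathbb{P}^*=\mathbb{P}$, $\mathbb{P}\vec{w}_n=\vec{w}_n$, and $J_n\vec{w}_n=\vec{w}_n$. Each of the four nonlinear contributions then reduces to an ordinary advection triple product, and since $\vec{w}_n$ and $J_n\vec{g}$ are both divergence free, the integrals $\int \vec{w}_n\cdot(\vec{w}_n\cdot\nabla)\vec{w}_n\,dx$ and $\int \vec{w}_n\cdot(J_n\vec{g}\cdot\nabla)\vec{w}_n\,dx$ vanish, leaving the identity
\begin{equation*}
\tfrac{1}{2}\tfrac{d}{dt}\|\vec{w}_n\|_{L^2}^2 + \|\nabla\vec{w}_n\|_{L^2}^2 = -\int \vec{w}_n\cdot(\vec{w}_n\cdot\nabla)J_n\vec{g}\,dx - \int \vec{w}_n\cdot(J_n\vec{g}\cdot\nabla)J_n\vec{g}\,dx.
\end{equation*}

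Both right-hand integrals must be controlled by Hölder, Gagliardo--Nirenberg, and Young so that a small $\varepsilon\|\nabla\vec{w}_n\|_{L^2}^2$ is absorbed on the left, leaving a source of the form $a(t)\|\vec{w}_n\|_{L^2}^2+b(t)$ with $a,b\in L^1_t([0,T])$. For the second integral, one integration by parts (using $\nabla\cdot J_n\vec{g}=0$) gives the bound $\|\nabla\vec{w}_n\|_{L^2}\|J_n\vec{g}\|_{L^4}^2$; Sobolev embedding $\dot H^{1/4}(\mathbb{R}^3)\hookrightarrow L^4(\mathbb{R}^3)$ combined with the $L^{8/3}_tL^{8/3}_x$ and $L^8_tL^8_x$ bounds of \eqref{condtg} (for $d=3$), or the $L^4_tL^4_x$ bound (for $d=2$), render $\|J_n\vec{g}\|_{L^4}^4$ integrable in time, the weight $t^\gamma$ being absorbed by the finite-interval bound $t^{-\gamma}\leq T^{-\gamma}$ on $[0,T]$. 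For the first integral, Hölder in the tight case $d=3$ yields $\|\vec{w}_n\|_{L^3}\|\nabla\vec{w}_n\|_{L^2}\|J_n\vec{g}\|_{L^6}$; plugging in $\|\vec{w}_n\|_{L^3}\lesssim \|\vec{w}_n\|_{L^2}^{1/2}\|\nabla\vec{w}_n\|_{L^2}^{1/2}$ and applying Young produces $\varepsilon\|\nabla\vec{w}_n\|_{L^2}^2+C\|\vec{w}_n\|_{L^2}^2\|J_n\vec{g}\|_{L^6}^4$. An interpolation between the weighted $L^2_tL^6_x$ bound in \eqref{condtg} and the $L^8_tL^8_x$ bound (passing through $\dot H^{1/4}\hookrightarrow L^4$) supplies the needed $L^1_t$ integrability of $\|J_n\vec{g}\|_{L^6}^4$, and Gronwall then delivers \eqref{energyE} with constant depending only on $T,\lambda,\gamma,s$.

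For \eqref{energydw}, I would estimate each term of \eqref{newapproximation} in $H^{-1}_x$, using that $J_n\mathbb{P}$ is bounded on every $L^p$ uniformly in $n$. The diffusion contributes $\|\Delta\vec{w}_n\|_{H^{-1}_x}\leq \|\nabla\vec{w}_n\|_{L^2_x}$, which sits in $L^2_t\subset L^{4/d}_t$ by \eqref{energyE}. For the quadratic pieces, duality gives $\|\nabla\cdot(\vec{a}\otimes\vec{b})\|_{H^{-1}_x}\lesssim \|\vec{a}\|_{L^{p_1}}\|\vec{b}\|_{L^{p_2}}$ whenever $1/p_1+1/p_2=1/2$. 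Taking $p_1=p_2=4$ for $\vec{w}_n\otimes\vec{w}_n$ and invoking the Gagliardo--Nirenberg inequality $\|\vec{w}_n\|_{L^4}^2\lesssim \|\vec{w}_n\|_{L^2}^{(4-d)/2}\|\nabla\vec{w}_n\|_{L^2}^{d/2}$, the corresponding $L^{4/d}_t$-norm is controlled by $E(\vec{w}_n)$ through \eqref{energyE}; the mixed $\vec{w}_n\otimes J_n\vec{g}$ and pure $J_n\vec{g}\otimes J_n\vec{g}$ terms admit analogous Hölder splits, feeding one factor through \eqref{condtg}. The main obstacle is the borderline $d=3$ case, where the Gagliardo--Nirenberg exponent $d/2=3/2$ on $\|\nabla\vec{w}_n\|_{L^2}$ is exactly dual to $L^{4/3}_t$, so the estimate closes only marginally; the delicate bookkeeping is to check that the space-time integrabilities in \eqref{condtg} are just strong enough at the regularity threshold $s<1/4$ to match the $L^{4/3}_t$ pairing, which requires careful tracking of the negative power $t^\gamma$ throughout.
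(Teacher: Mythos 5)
Your overall skeleton (test \eqref{newapproximation} with $\vec{w}_n$, kill the two advection triple products, absorb $\varepsilon\|\nabla\vec{w}_n\|_{L^2}^2$, Gronwall, then a duality estimate for $\partial_t\vec{w}_n$) is reasonable, and for $d=2$ it essentially closes. But the decisive step for $d=3$ is justified incorrectly. Your cross term requires $\int_0^T\|J_n\vec{g}\|_{L^6_x}^4\,dt<\infty$, and you derive it from ``interpolation between the weighted $L^2_tL^6_x$ bound and the $L^8_tL^8_x$ bound, passing through $\dot H^{1/4}\hookrightarrow L^4$''. That embedding is false: $\dot H^{1/4}(\mathbb{R}^3)\hookrightarrow L^{12/5}$, not $L^4$ (one needs $3/4$ of a derivative in $L^2$ for $L^4$). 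Moreover, interpolating $L^2_tL^6_x$ with $L^8_tL^8_x$ at time exponent $4$ gives the spatial exponent $36/5>6$, and on the whole space there is no way to descend from $L^{36/5}_x$ to $L^6_x$. So the key integrability is not proved as written. It is attainable, but only by genuinely using the quarter-derivative gain in \eqref{condtg}: $(I+(-\Delta)^{1/4})\vec{g}\in L^2_tL^6_x$ gives $\vec{g}\in L^2_tL^q_x$ for every finite $q$ via $W^{1/2,6}(\mathbb{R}^3)\hookrightarrow L^q$, and then a three-exponent H\"older in $t$ and $x$, combining this with \emph{both} the $L^{8/3}_{t,x}$ and the $L^8_{t,x}$ pieces, lands on $L^4_tL^6_x$ (the point $(1/4,1/6)$ lies in the convex hull of $(1/2,1/q)$, $(3/8,3/8)$, $(1/8,1/8)$ for $q$ large, with weights roughly $1/6$, $1/4$, $7/12$). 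The same criticism applies to your $\vec{g}\otimes\vec{g}$ term in $d=3$: the needed bound $\vec{g}\in L^4_{t,x}$ follows simply from $\|\vec{g}\|_{L^4_{t,x}}\le\|\vec{g}\|_{L^{8/3}_{t,x}}^{1/2}\|\vec{g}\|_{L^{8}_{t,x}}^{1/2}$, not from the embedding you quote. Also, your claim that $J_n\mathbb{P}$ is bounded on every $L^p$ is not needed (and for the sharp ball cutoff is false for $p\ne2$); for the $H^{-1}_x$ bound only $L^2$-boundedness is used.

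Beyond these concrete errors, note that your route differs from the paper's: the paper does not attempt a single global-in-time Gronwall at all, but defers to Theorem 5.1 of \cite{NPS}, which works with two different formulations of \eqref{nonlinearpart} on $t$ near $0$ and $t$ away from $0$; Remark \ref{rem_g} stresses that the standard energy estimate only yields the ``large time'' control and that the uniform short-time estimate is the delicate point. That is also where the weights $t^{\gamma}$ with $\gamma<0$ in \eqref{condtg} are actually exploited, whereas in your scheme they are simply discarded through $t^{-\gamma}\le T^{-\gamma}$ --- a sign that your argument is not engaging with the short-time difficulty the hypotheses were designed for. If you want to keep your one-shot Gronwall approach, you must replace the flawed interpolation step by the Sobolev-plus-three-exponent H\"older argument above (or reorganize the near-$t=0$ part of the estimate as in \cite{NPS}), and track that the resulting constants depend only on $T,\lambda,\gamma,s$.
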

\begin{rem}\label{rem_g}
The goal of the lemma is to give a uniform (in $n$) {\it a priori} energy estimate of the approximating solutions of \eqref{newapproximation}. The standard energy estimate together with the linear estimates in $\vec{g}$ provides the uniform control on the energy for ``large time". However the uniform ``short time" estimate is much more delicate.
\end{rem}

\begin{proof}
We follow the idea from \cite{NPS}. That is, we use two different formulations of system \eqref{nonlinearpart} to handle the case when $t$ is near zero and when $t$ is away from zero respectively. For details, please refer to Theorem 5.1 in \cite{NPS}.
\end{proof}

\subsection{Proof of Theorem \ref{thm_existapprox}} Applying Lemma \ref{energyestimate} to the approximate system \eqref{newapproximation} implies that the $L^2$ norm of $\vec{w}_n$ is uniformly bounded and hence, $\delta$ can be taken up to any time $T$. With Lemma \ref{energyestimate}, the Aubin-Lions Lemma, and together with the fact that $J_ng$ converges to $g$ in $L^p$, one allows us to pass to the limit in \eqref{newapproximation}. This yields the global existence of a solution $\vec{w}$ to \eqref{nonlinearpart}.

\subsection{Uniqueness in $2D$} Furthermore, similar as for the classical Navier-Stokes equations, we have the following uniqueness result of the difference equation \eqref{nonlinearpart} when $d=2$. The proof follows quite similarly to Theorem 7.1 in \cite{NPS} and hence we omit it.
\begin{thm}\label{thm_uniqueness2d}
Assume that $\vec{g}$ satisfies the conditions in Theorem \ref{thm_existapprox} and $d = 2$. Then for any $T>0$ system \eqref{nonlinearpart} admits a unique weak solution in $L^2([0, T]; V) \cap L^\infty([0, T]; H)$.
\end{thm}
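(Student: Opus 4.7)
The plan is to adapt the classical $2$D Navier--Stokes uniqueness argument to the present setting, with the additional drift terms involving $\vec{g}$ treated via the weighted integrability condition \eqref{condtg}. Let $\vec{w}_1, \vec{w}_2$ be two weak solutions of \eqref{nonlinearpart} in $L^2([0,T];V)\cap L^\infty([0,T];H)$ (both have initial data $0$), and set $\vec{W}:=\vec{w}_1-\vec{w}_2$. Using bilinearity of the tensor products, $\vec{W}$ is a divergence free weak solution of
\begin{equation*}
\pa_t \vec{W} - \Delta \vec{W} + \mathbb{P}\nabla\cdot\bigl(\vec{w}_1\otimes\vec{W}+\vec{W}\otimes\vec{w}_2+\vec{g}\otimes\vec{W}+\vec{W}\otimes\vec{g}\bigr)=0,
\qquad \vec{W}(0)=0.
\end{equation*}

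First I would test this equation against $\vec{W}$. Because $\vec{w}_1$, $\vec{g}$, and $\vec{W}$ are all divergence free, the transport-type contributions $\int (\vec{w}_1\cdot\nabla)\vec{W}\cdot\vec{W}\,dx$ and $\int (\vec{g}\cdot\nabla)\vec{W}\cdot\vec{W}\,dx$ vanish, yielding
\begin{equation*}
\tfrac{1}{2}\tfrac{d}{dt}\|\vec{W}\|_{L^2}^2+\|\nabla\vec{W}\|_{L^2}^2
= -\int_{\R^2}(\vec{W}\cdot\nabla)\vec{w}_2\cdot\vec{W}\,dx-\int_{\R^2}(\vec{W}\cdot\nabla)\vec{g}\cdot\vec{W}\,dx.
\end{equation*}
Next I would estimate the two remaining trilinear terms by H\"older, the $2$D Ladyzhenskaya inequality $\|u\|_{L^4}^2\lesssim \|u\|_{L^2}\|\nabla u\|_{L^2}$, and Young's inequality. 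The first term is bounded by $\|\vec{W}\|_{L^4}^2\|\nabla\vec{w}_2\|_{L^2}\lesssim \|\vec{W}\|_{L^2}\|\nabla\vec{W}\|_{L^2}\|\nabla\vec{w}_2\|_{L^2}$; after integrating by parts in the $\vec{g}$ term (using $\nabla\cdot\vec{W}=0$) the second is bounded by $\|\vec{g}\|_{L^4}\|\vec{W}\|_{L^4}\|\nabla\vec{W}\|_{L^2}\lesssim \|\vec{g}\|_{L^4}\|\vec{W}\|_{L^2}^{1/2}\|\nabla\vec{W}\|_{L^2}^{3/2}$. Absorbing a small fraction of $\|\nabla\vec{W}\|_{L^2}^2$ into the dissipation via $\varepsilon$-Young produces
\begin{equation*}
\tfrac{d}{dt}\|\vec{W}\|_{L^2}^2 \leq C\bigl(\|\nabla\vec{w}_2\|_{L^2}^2+\|\vec{g}\|_{L^4}^4\bigr)\|\vec{W}\|_{L^2}^2.
\end{equation*}

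The conclusion is a Gr\"onwall argument, which requires the coefficient to be integrable on $[0,T]$. The term $\int_0^T\|\nabla\vec{w}_2\|_{L^2}^2\,dt$ is finite because $\vec{w}_2\in L^2([0,T];V)$. The term $\int_0^T\|\vec{g}\|_{L^4}^4\,dt$ is the delicate one since $\vec{g}$ can be singular at $t=0$, and here the weighted bound $\|t^{\gamma}\vec{g}\|_{L^4([0,T];L^4_x)}\leq\lambda$ with $\gamma<0$ is essential: writing $\|\vec{g}\|_{L^4}^4=t^{-4\gamma}\bigl(t^{\gamma}\|\vec{g}\|_{L^4}\bigr)^4$ and bounding $t^{-4\gamma}\leq T^{-4\gamma}$ on $[0,T]$ gives $\int_0^T\|\vec{g}\|_{L^4}^4\,dt\leq T^{-4\gamma}\lambda^4<\infty$. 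With $\vec{W}(0)=0$, Gr\"onwall forces $\vec{W}\equiv 0$, i.e., $\vec{w}_1=\vec{w}_2$.

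The main obstacle is precisely the short-time behavior of $\vec{g}$: plain $L^4_tL^4_x$ integrability would not be available for $\vec{g}=e^{t\Delta}\vec{f}^{\omega}$ when $\vec{f}\in H^{-s}$, and only the weighted condition \eqref{condtg} ensures the Gr\"onwall coefficient is integrable. The restriction to $d=2$ is intrinsic to the method, because Ladyzhenskaya's $L^4\leftrightarrow L^2\cdot \dot H^1$ interpolation is what lets the dissipation absorb the nonlinear terms; this interpolation fails in $d=3$, which is why uniqueness is only claimed in $2$D.
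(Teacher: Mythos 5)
Your argument is correct and is essentially the proof the paper has in mind: the paper omits the details and points to Theorem 7.1 of \cite{NPS}, which is exactly this classical $2$D energy argument for the difference of two solutions, using Ladyzhenskaya's inequality, absorption into the dissipation, and Gr\"onwall, with the singular-in-time drift $\vec{g}$ controlled through the weighted bound $\|t^{\gamma}\vec{g}\|_{L^4([0,T];L^4_x)}\leq\lambda$ just as you do. The only point left implicit (standard in $2$D, and justified here since $\pa_t\vec{w}\in L^{2}([0,T];H^{-1})$ by Lemma \ref{energyestimate}) is that the energy identity for $\vec{W}$ may legitimately be obtained by testing with $\vec{W}$ itself.
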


\section{Almost Sure Existence}\label{sec_asexist}

First we follow the same idea as in \cite{BT1} and \cite{NPS} to derive estimates on the linear evolution part of the randomized data, which involve both the deterministic and probabilistic aspects.

\subsection{Deterministic heat flow estimates}

\begin{lem}\label{lem_heatflow}
Let $0<s<1$, $k \in \mathbb{N} \cup \{0\}$, and $\vec{u}_{\vec{f}^\omega} = e^{t\Delta} \vec{f}^\omega$. If $\vec{f}^\omega \in (H^{-s}(\mathbb{R}^d))^d$ then
\begin{align}
\LN \nabla^k \vec{u}_{\vec{f}^\omega} (\cdot, t) \RN_{L^2_x} & \lesssim \LC 1 + t^{-{s+k\over2}} \RC \|\vec{f}\|_{H^{-s}},\label{linearest1}\\
\LN \nabla^k \vec{u}_{\vec{f}^\omega} (\cdot, t) \RN_{L^\infty_x} & \lesssim \LC \max\{ t^{-1}, t^{-(k+s+{d\over2})} \} \RC \|\vec{f}\|_{H^{-s}}. \label{linearest2}
\end{align}
\end{lem}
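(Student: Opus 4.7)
The plan is to work on the Fourier side and let the strong Gaussian decay of the heat multiplier $e^{-t|\xi|^2}$ absorb both the differentiation factor $|\xi|^k$ and the $H^{-s}$ weight $\langle\xi\rangle^s$. Throughout, I would pair this with the orthogonality coming from the disjointness of the annular supports $A_n$ of the Fourier blocks $\tilde\Delta_n\vec{f}$ used in the randomization.

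For the $L^2$ bound \eqref{linearest1}, I would apply Plancherel to write $\|\nabla^k e^{t\Delta}\vec{f}^\omega\|_{L^2_x}^2$ as the integral of $|\xi|^{2k}\,e^{-2t|\xi|^2}\,|\widehat{\vec{f}^\omega}(\xi)|^2$, and then factor out $\langle\xi\rangle^{-2s}$ so that the matter reduces to the pointwise multiplier bound $\sup_\xi |\xi|^{2k}\langle\xi\rangle^{2s} e^{-2t|\xi|^2}\lesssim 1+t^{-(s+k)}$. This is elementary: for $|\xi|\le 1$ the weight is $O(1)$, and for $|\xi|>1$ one uses $\langle\xi\rangle\lesssim|\xi|$ and maximises $|\xi|^{2(k+s)}e^{-2t|\xi|^2}$ at $|\xi|^2=(k+s)/(2t)$.

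For the $L^\infty$ bound \eqref{linearest2}, I would use the inversion inequality $\|h\|_{L^\infty}\lesssim\|\hat h\|_{L^1}$ together with Cauchy--Schwarz against $\langle\xi\rangle^s$, reducing matters to the Gaussian-type integral $\int |\xi|^{2k}\langle\xi\rangle^{2s} e^{-2t|\xi|^2}\,d\xi$. A low/high-frequency split, followed by the rescaling $\eta=t^{1/2}\xi$ on the high-frequency piece, then produces the temporal profile claimed in \eqref{linearest2}.

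The step that will need care is going from $\|\vec{f}^\omega\|_{H^{-s}}$, which naturally appears on the Fourier side, to the deterministic $\|\vec{f}\|_{H^{-s}}$ on the right-hand side of the statement. By orthogonality of the disjoint Fourier blocks,
\begin{equation*}
\|\vec{f}^\omega\|_{H^{-s}}^2 = \sum_{n\in\NN} |l_n(\omega)|^2\,\|\tilde\Delta_n\vec{f}\|_{H^{-s}}^2,
\end{equation*}
so what is really required is almost-sure control on $\sup_n |l_n(\omega)|$. This will follow from the sub-Gaussian moment hypothesis \eqref{cond1} via a Borel--Cantelli argument, the resulting random constant being absorbed into the implicit constants of \eqref{linearest1}--\eqref{linearest2}.
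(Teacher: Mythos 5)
Your deterministic core is sound and is essentially the standard argument that the paper itself does not spell out (it simply defers to Lemma 3.1 of \cite{NPS}): Plancherel plus the pointwise bound $\sup_\xi |\xi|^{2k}\langle\xi\rangle^{2s}e^{-2t|\xi|^2}\lesssim 1+t^{-(k+s)}$ gives \eqref{linearest1}, and Fourier inversion plus Cauchy--Schwarz against $\langle\xi\rangle^{s}$ reduces \eqref{linearest2} to the integral $I(t)=\int|\xi|^{2k}\langle\xi\rangle^{2s}e^{-2t|\xi|^2}d\xi$. But be careful with what that integral actually yields: the low/high split gives $I(t)\lesssim \min\{1,t^{-d/2}\}+t^{-(k+s+d/2)}$, so the $L^\infty$ bound you obtain is $\big(\max\{t^{-1},t^{-(k+s+d/2)}\}\big)^{1/2}\|\cdot\|_{H^{-s}}$ (for $d\ge 2$), i.e.\ the profile appearing in \eqref{condg}, \emph{not} the un-square-rooted profile literally written in \eqref{linearest2}. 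This is not a cosmetic point: on $\mathbb{R}^d$ the large-time decay of $e^{t\Delta}$ on $H^{-s}$ data is only $t^{-d/4}$ (test $\hat f$ concentrated on $|\xi|\sim t^{-1/2}$), so the literal statement \eqref{linearest2}, which demands $t^{-1}$ decay for large $t$, cannot come out of your computation; the square-rooted version (consistent with \eqref{condg}, and with the periodic analogue in \cite{NPS}) is the one you actually prove, and you should say so rather than claim your integral ``produces the temporal profile claimed''.

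The genuine gap is your final paragraph. You cannot pass from $\|\vec f^\omega\|_{H^{-s}}$ to $\|\vec f\|_{H^{-s}}$ by controlling $\sup_n|l_n(\omega)|$: under \eqref{cond1} the $l_n$ are merely sub-Gaussian (e.g.\ i.i.d.\ standard Gaussians are admissible), and for such variables $\sup_n|l_n(\omega)|=\infty$ almost surely; Borel--Cantelli only gives $|l_n(\omega)|\lesssim\sqrt{\log n}$ eventually, and even in situations where the sup is finite it is a random quantity, so it cannot be ``absorbed into the implicit constants'' if those are meant to be independent of $\omega$ --- with a uniform constant and $\|\vec f\|_{H^{-s}}$ on the right, the estimate is simply false pathwise. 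The correct reading (and the one in \cite{NPS}, to which the paper's one-line proof refers) is that this lemma is purely deterministic: it holds for any element of $(H^{-s}(\mathbb{R}^d))^d$ with \emph{its own} norm, hence with $\|\vec f^\omega\|_{H^{-s}}$ on the right-hand side, which is almost surely finite because $\vec f^\omega\in L^2(\Omega;(H^{-s})^d)$ by the orthogonality identity you wrote down together with $\sup_n\mathbb{E}|l_n|^2<\infty$ from \eqref{cond1}. The quantitative, $\omega$-uniform control in terms of $\|\vec f\|_{H^{-s}}$ is exactly what Proposition \ref{prop_averaging} supplies (with large deviation bounds), and it is there, not in this lemma, that the randomness is exploited; your attempt to smuggle that control into Lemma \ref{lem_heatflow} is the step that fails.
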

\begin{proof}
The proof follows quite similarly to the proof of Lemma 3.1 in \cite{NPS} and hence we omit it here.
\end{proof}

\subsection{Averaging effects: probabilistic heat flow estimates}

Now we give the improved $L^p$ estimates for the linear evolution.
 \begin{prop}\label{prop_averaging}
 Let $T > 0$ and $s \ge 0$. Let $r  \ge  p  \ge  q  \ge  2$, $\gamma\in\mathbb{R}$, and $\si  \ge  0$  be such that
 \begin{equation}\label{conds}
(\si + s - 2\g  ) q < 2.
\end{equation}
Then there exists $C_T > 0$ such that for every $ \vec{f} \in \big(H^{-s}(\hR^d)\big)^d$,
 \bea \label{eq:a002}
 \big\| t^\gamma (-\D)^{\frac{\si}{2}}e^{t \D} \vec{f}^\o \big\|_{L^r(\O; L^q([0,T];L^p_x))}
 \le C_T \|\vec{f}\|_{H^{-s}} ,
 \eea
 where $C_T= C_T(p,q,r,\si,s)$.

 Moreover, thanks to the Bienaym\'e-Tchebichev inequality, if we set
  \bea \label{eq:a003}
  E_{\l,T,\vec{f},\si,p} = \big\{ \o \in \O:
  \big\| t^\gamma(-\D)^{\frac{\si}{2}}e^{t \D} \vec{f}^\o \big\|_{  L^q([0,T];L^p_x)}  \ge \l \big\},
  \eea
  then there exists $C_1, C_2 > 0$ such that for every $\l > 0$ and for every $\vec{f} \in \big(H^{-s}(\hR^d)\big)^d$,
  \bea  \label{eq:a004}
   P \big(E_{\l,T,\vec{f},\si,p}\big) \le C_1 \exp \left[ -C_2 \frac{\l^2}{C_T \|\vec{f}\|^2_{H^{-s}}} \right] .
  \eea
 \end{prop}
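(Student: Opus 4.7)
The plan is to follow the Burq--Tzvetkov and Nahmod--Pavlovi\'c--Staffilani template. The structural property of the partition \eqref{partition} that I would exploit is $|A_n|\lesssim 1$ uniformly in $n$: by Hausdorff--Young and H\"older on the frequency side, any function with Fourier support in $A_n$ satisfies $\|\cdot\|_{L^p_x}\lesssim \|\cdot\|_{L^2_x}$ with a constant independent of $n$, for all $p\in[2,\infty]$. This plays on $\mathbb{R}^d$ the role that ``each frequency block is a single Fourier mode'' plays on $\mathbb{T}^d$, and is what allows the NPS strategy to transplant to the whole space.

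Concretely, since $r\ge p\ge q\ge 2$, I would first apply Minkowski's integral inequality to move the $L^r_\omega$ norm innermost, reducing the problem to $\|\cdot\|_{L^q_tL^p_xL^r_\omega}$. For fixed $(t,x)$, expanding $\vec{f}^\omega=\sum_n l_n(\omega)\tilde\Delta_n\vec{f}$ and using the sub-Gaussian hypothesis \eqref{cond1} with independence in the standard way (compute $|\mathbb{E}[e^{\gamma\sum_n l_nc_n}]|\le e^{c\gamma^2\sum_n c_n^2}$ and then extract $L^r$ moments via the layer-cake formula) yields the Khintchine-type estimate $\|\sum_n l_n(\omega)F_n(t,x)\|_{L^r_\omega}\le C\sqrt{r}\,(\sum_n|F_n(t,x)|^2)^{1/2}$, where $F_n:=t^\gamma(-\Delta)^{\sigma/2}e^{t\Delta}\tilde\Delta_n\vec{f}$. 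Since $p,q\ge 2$, a second Minkowski pulls the $\ell^2_n$ outside to produce $(\sum_n\|F_n\|_{L^q_tL^p_x}^2)^{1/2}$.

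For each single block, the $|A_n|\lesssim 1$ trick replaces $L^p_x$ by $L^2_x$; Plancherel together with the pointwise bound $|\xi|^\sigma e^{-t|\xi|^2}\le n^{\sigma/d}e^{-ctn^{2/d}}$ on $A_n$ then gives $\|F_n(t,\cdot)\|_{L^2_x}\lesssim t^\gamma n^{\sigma/d}e^{-ctn^{2/d}}\|\tilde\Delta_n\vec{f}\|_{L^2}$. Time integration via the substitution $u=ctn^{2/d}$ yields $\|t^\gamma e^{-ctn^{2/d}}\|_{L^q([0,T])}\lesssim n^{-(2/d)(\gamma+1/q)}$, where the integrability at $t=0$ follows from $\gamma q>-1$, itself a consequence of \eqref{conds} with $\sigma,s\ge 0$. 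Summing,
\[
\sum_n\|F_n\|_{L^q_tL^p_x}^2\lesssim \sum_n n^{(2/d)(\sigma-2\gamma-2/q)}\|\tilde\Delta_n\vec{f}\|_{L^2}^2,
\]
and the hypothesis \eqref{conds} is exactly $(2/d)(\sigma-2\gamma-2/q)<-2s/d$, so the $n$-weight is dominated by $(1+n^{2/d})^{-s}$ and the sum is controlled by $\|\vec{f}\|_{H^{-s}}^2$, closing \eqref{eq:a002}.

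For the tail estimate \eqref{eq:a004}, the same chain in fact produces $\|X\|_{L^r_\omega}\le C\sqrt{r}\,C_T'\|\vec{f}\|_{H^{-s}}$ with the $\sqrt{r}$ kept explicit, where $X$ denotes the space-time norm sitting inside the $L^r_\omega$. Bienaym\'e--Tchebichev then gives $P(X\ge\lambda)\le (C\sqrt{r}\,C_T'\|\vec{f}\|_{H^{-s}}/\lambda)^r$, and optimizing in $r$ (choose $r\sim \lambda^2/(eC^2(C_T')^2\|\vec{f}\|_{H^{-s}}^2)$) yields the Gaussian tail \eqref{eq:a004}. The main obstacle I anticipate is bookkeeping: keeping the $\sqrt{r}$ explicit through the Minkowski--Khintchine chain (so the final tail is $\exp(-c\lambda^2)$ rather than merely polynomial), and verifying that the $n$-exponent $(2/d)(\sigma-2\gamma-2/q)$ produced by $(-\Delta)^{\sigma/2}$, the weight $t^\gamma$, and the $L^q_t$ integration is absorbed exactly by the $H^{-s}$ regularity via the sharp assumption \eqref{conds}.
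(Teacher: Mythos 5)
Your proposal is correct and follows essentially the same route as the paper: Minkowski's inequality for $r\ge p\ge q\ge 2$, the Khintchine/large-deviation bound from \eqref{cond1} (Lemma 3.1 of \cite{BT1}), a per-block estimate exploiting $|A_n|\lesssim 1$ via Hausdorff--Young/H\"older, and time integration under exactly the condition \eqref{conds}, with the tail bound obtained by Chebyshev and optimization in $r$. The only (harmless) difference is bookkeeping: the paper conjugates by $\lan\sqrt{-\D}\ran^{\pm s/2}$ and bounds the symbol uniformly in $n$ by $t^{-\si/2}+t^{-(\si+s)/2}$, whereas you keep the per-block heat decay $e^{-ctn^{2/d}}$ and absorb the $H^{-s}$ weight through the frequency localization of $A_n$, arriving at the same constraint.
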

\begin{proof}
For $t \ne 0$, let $\vec{h}(x) = \lan \sqrt{- \D} \ran^{-\frac{s}{2}} \vec{f}(x)$.
From \eqref{randomization} we have
  \begin{equation}\label{eq:a005}
  \begin{split}
   t^\gamma(-\D)^{\frac{\si}{2}} e^{t \D} \vec{f}^\o (x)
  &=  t^\gamma(-\D)^{\frac{\si}{2}} \lan \sqrt{- \D} \ran^{\frac{s}{2}}
  e^{t \D} \lan \sqrt{- \D} \ran^{-\frac{s}{2}} \vec{f}^\o (x)   \\
  &= \sum_{n\in \mathbb{N}} t^\gamma(-\D)^{\frac{\si}{2}} \lan \sqrt{- \D} \ran^{\frac{s}{2}}e^{t \D} l_n(\omega)\tilde{\D}_n \vec{h}  \\
 &= \sum_{n\in \mathbb{N}} l_n(\omega)\tilde{\D}_n\LB t^\gamma(-\D)^{\frac{\si}{2}} \lan \sqrt{- \D} \ran^{\frac{s}{2}}e^{t \D}  \vec{h} \RB.
  \end{split}
  \end{equation}

Applying Lemma 3.1 of \cite{BT1} and Minkowski's inequality we have
\begin{align*}
\LN t^\gamma (-\D)^{\frac{\si}{2}}e^{t \D} \vec{f}^\o \RN_{L^r(\O; L^q([0,T];L^p_x))} &\lesssim \sqrt{r} \LN \LC \sum_{n\in\mathbb{N}} \LB \tilde{\D}_n t^\gamma(-\D)^{\frac{\si}{2}} \lan \sqrt{- \D} \ran^{\frac{s}{2}}e^{t \D}  \vec{h}  \RB^2 \RC^{1/2} \RN_{L^q([0,T];L^p_x)}\\
&\lesssim \sqrt{r} \LC \sum_{n\in\mathbb{N}}\LN \tilde{\D}_n t^\gamma(-\D)^{\frac{\si}{2}} \lan \sqrt{- \D} \ran^{\frac{s}{2}}e^{t \D}  \vec{h}   \RN^2_{L^q([0,T];L^p_x)} \RC^{1/2}.
\end{align*}
Recall the Haussdorf-Young inequality
\begin{equation}\label{Becknerinequ}
\LN \hat{f}(\xi) \RN_{L^{p'}_\xi} \lesssim \|f(x)\|_{L^p_x}, \quad \text{for } 1\leq p \leq 2, \text{ and } {1\over p} + {1\over p'} = 1.
\end{equation}

Note that by assumption, $p\geq 2$. So the application of the dual form of \eqref{Becknerinequ} implies that
\begin{align*}
\LN t^\gamma (-\D)^{\frac{\si}{2}}e^{t \D} \vec{f}^\o \RN_{L^r(\O; L^q([0,T];L^p_x))} &\lesssim \sqrt{r} \LC \sum_{n\in\mathbb{N}}\LN t^\gamma |\xi|^\si \lan \xi \ran^{s/2} e^{-t|\xi|^2} \hat{h} \chi_{A_n}   \RN^2_{L^q([0,T];L^{p'}_\xi)} \RC^{1/2}.
\end{align*}
Using H\"{o}lder and Minkowski inequalities the above estimate further yields
\begin{align*}
& \LN t^\gamma (-\D)^{\frac{\si}{2}}e^{t \D} \vec{f}^\o \RN _{L^r(\O; L^q([0,T];L^p_x))} \\
& \quad \lesssim \sqrt{r} \sum_{n\in\mathbb{N}} \LN \tilde{\D}_n h \RN_{L^2} \LC \int^T_0 t^{q\gamma} \LN |\xi|^\si \lan \xi \ran^{s/2} e^{-t|\xi|^2} \RN^q_{L^{2p/(p-2)}_\xi(A_n)} \ dt \RC^{1/q}.
\end{align*}
It follows that
\begin{align*}
|\xi|^\si \lan \xi \ran^{s/2} e^{-t|\xi|^2} & \lesssim t^{-\si/2} \LC t|\xi|^2 \RC^{\si/2} e^{-t|\xi|^2} + t^{-(\si+s)/2} \LC t|\xi|^2 \RC^{(\si+s)/2} e^{-t|\xi|^2}\\
& \lesssim t^{-\si/2} + t^{-(\si+s)/2}.
\end{align*}
So
\begin{equation*}
\LN |\xi|^\si \lan \xi \ran^{s/2} e^{-t|\xi|^2} \RN_{L^{2p/(p-2)}_\xi(A_n)} \lesssim \LC t^{-\si/2} + t^{-(\si+s)/2} \RC |A_n| \lesssim t^{-\si/2} + t^{-(\si+s)/2}.
\end{equation*}

This way we finally obtain
\begin{align}
& \LN t^\gamma (-\D)^{\frac{\si}{2}}e^{t \D} \vec{f}^\o \RN_{L^r(\O; L^q([0,T];L^p_x))} \\
& \quad \lesssim \sqrt{r} \sum_{n\in\mathbb{N}} \LN \tilde{\D}_n h \RN_{L^2} \LC \int^T_0 t^{q\gamma} (t^{-\si/2} + t^{-(\si+s)/2})^q \ dt \RC^{1/q} \nonumber \\
& \quad \lesssim \sqrt{r} \sum_{n\in\mathbb{N}} \LN \tilde{\D}_n h \RN_{L^2} \LB \LC \int^T_0 {1\over t^{({\si\over2} -\gamma)q}} \ dt \RC^{1/q} + \LC \int^T_0 {1\over t^{({\si+s \over 2} - \gamma)q}} \ dt \RC^{1/q} \RB \nonumber \\
& \quad \lesssim \sqrt{r} \LC T^{{1\over q} + {2\gamma - \si \over 2}} + T^{{1\over q} + {2\gamma - (\si + s) \over 2}} \RC \LN h \RN_{L^2} \label{estprop}
\end{align}
provided
\begin{equation*}
\LC {\si\over2} -\gamma \RC q<1, \quad \LC {\si+s \over 2} - \gamma \RC q<1.
\end{equation*}
The above condition is satisfied under the assumption in the proposition. Hence the estimate \eqref{eq:a002} follows from \eqref{estprop}.
\end{proof}

\subsection{Proof of Theorem \ref{thm_exist}}

Recall that in the construction of the weak solution we decomposed $\vec{u}$ as $\vec{u} = e^{t\Delta} \vec{f}^\omega + \vec{w}$, and hence we only need to seek the weak solution $\vec{w}$ to the difference equation \eqref{nonlinearpart}.  As is stated in Theorem \ref{thm_existapprox}, this is guaranteed when condition \eqref{condtg} is satisfied for the linear evolution. One can follow the argument in \cite{NPS} to obtain the almost sure existence result.

\end{document}